\definecolor{violet}{rgb}{1,0.5,0}
\newtheorem{theorem}{Theorem}
\DeclareMathAlphabet{\mathbf}{OT1}{cmr}{bx}{it}
\newcommand{\vb}{\mathbf b}
\newcommand{\vd}{\mathbf d}
\newcommand{\vdtil}{\widetilde{\vd}}
\newcommand{\ve}{\mathbf e}
\newcommand{\vf}{\mathbf f}
\newcommand{\vq}{\mathbf q}
\newcommand{\vr}{\mathbf r}
\newcommand{\vv}{\mathbf v}
\newcommand{\vw}{\mathbf w}
\newcommand{\vx}{\mathbf x}
\newcommand{\vy}{\mathbf y}
\newcommand{\vz}{\mathbf z}
\newcommand{\Hunder}{\underline{H}}
\newcommand{\spK}{\mathcal K} 
\newcommand{\C}{\mathbb C}
\newcommand{\R}{\mathbb R}
\DeclareMathOperator{\diag}{diag} %
\DeclareMathOperator{\Span}{span} %
\DeclareMathOperator{\sinc}{sinc} %
\newcommand{\divdif}{\mathord{\kern.43em{\vrule width.6pt height7pt depth-.28pt} \kern-.41em\Delta}}
\renewcommand{\d}{\,\mathrm{d}}
\newcommand{\inv}{{-1}}
\newcommand{\one}{{(1)}}
\newcommand{\two}{{(2)}}
\renewcommand{\k}{{(k)}}
\newcommand{\kless}{{(k-1)}}
\newcommand{\kmore}{{(k+1)}}
\newcommand{\norm}[1]{\left\lVert#1\right\rVert}
\newcommand{\changed}[1]{#1}
\begin{document}

\title{Limited-memory polynomial methods\\ for~large-scale matrix functions}

\author{Stefan G\"uttel}
\affil{The University of Manchester, Department of Mathematics,
	Alan Turing Building,
	Manchester M13 9PL, United Kingdom}
\author{Daniel Kressner}
\affil{\'Ecole Polytechnique F\'ed\'erale de Lausanne,
	MA B2 514,
	Station 8,
	1015 Lausanne, Switzerland}
\author{Kathryn Lund\thanks{Corresponding author.}}
\affil{Charles University,
	Faculty of Mathematics and Physics,
	Department of Numerical Mathematics,
	Sokolovská 83,
	186 75 Prague 8, Czech Republic}

\maketitle

\abstract{Matrix functions are a central topic of linear algebra, and problems requiring their numerical approximation appear increasingly often in scientific computing. We review various limited-memory methods for the approximation of the action of a large-scale matrix function on a vector. Emphasis is put on polynomial methods, whose memory requirements are known or prescribed a priori. Methods based on explicit polynomial approximation or interpolation, as well as restarted Arnoldi methods, are treated in detail. An overview of existing software is also given, as well as a discussion of challenging open problems.}

\section{Introduction} \label{sec:introduction}

This survey is concerned with approximating the product of a matrix function $f(A)$, for a function $f$ of a large matrix $A\in\C^{N\times N}$, with a nonzero vector $\vb\in \C^N$. If $A$ is diagonalizable---that is, if there is an invertible matrix $X$ such that $A = X \text{diag}(\lambda_1,\ldots,\lambda_N) X^{-1}$, with the $\lambda_i$ denoting the eigenvalues of $A$---then the matrix function $f(A)$ is defined as 
\begin{equation} \label{eq:diagfA}
	f(A) = X \text{diag}(f(\lambda_1),\ldots,f(\lambda_N)) X^{-1}.
\end{equation}
\changed{Let us stress that diagonalizability is not needed to define matrix functions~\cite{Higham2008}.}
Assuming that $f:\Omega \to \C$ is analytic on a domain $\Omega \subset \C$ containing the spectrum $\Lambda(A) = \{\lambda_1,\ldots,\lambda_N\}$, \changed{one can define
$f(A)$ for general matrices $A$ via the Cauchy integral representation}
\begin{equation} \label{eq:contour}
	f(A) = \frac{1}{2\pi \mathrm i} \int_\Gamma f(z) (zI-A)^{-1}\,\mathrm{d}z,
\end{equation}
where $\Gamma \subset \Omega$ is a contour winding around $\Lambda(A)$ once.  Popular examples of matrix functions arise from the scalar functions $f(z) = z^{-1}$ (leading to the matrix inverse $f(A) = A^{-1}$), $\exp(z)$ (the matrix exponential), $z^{1/2}$ (the matrix square root), and $\mathrm{sign}(z)$ (the matrix sign function).

Just as the matrix inverse $A^{-1}$ is usually not needed explicitly in the context of solving linear systems of equations, it is more common to find the need for evaluating $f(A) \vb$, i.e., the action of a matrix function on a vector, rather than the generally dense matrix~$f(A)$. In Section~\ref{sec:applications}, we briefly discuss a number of applications leading to $f(A)\vb$ with possibly very large matrices~$A$, such as time-dependent partial differential equations (PDEs), problems in quantum chromodynamics,  and graph centrality measures. In this survey, we focus on situations where \emph{the only admissible operations with $A$ are matrix-vector products}.  This is the case, for example, when $A$ is sparse, but its size and sparsity pattern make it very expensive or even impossible to use sparse direct solvers \cite{duff2017direct}. Another frequently encountered case is that $A$ is actually a large dense matrix and cannot be stored explicitly, but there is additional structure that allows for fast matrix-vector products. A prominent example is the fast multipole method for discretized integral operators~\cite{Coifman1993}. If, on the other hand, linear system solves with $A$ are feasible, rational Krylov methods might be efficient alternatives to the polynomial methods discussed here. A separate review of rational Krylov methods for approximating $f(A)\vb$ is given in \cite{Guettel2013}.

This survey discusses algorithms for $f(A)\vb$ that are well suited for large dimensions~$N$ by only requiring a very limited amount of memory. We will focus on the following two main classes of such algorithms, treated in Sections~\ref{sec:expansions} and~\ref{sec:krylov_restarts}.

\begin{paragraph}{Class 1: Explicit polynomial approximation discussed in Section~\ref{sec:expansions}.}
Given a polynomial $p_m(z) = \alpha_0 + \alpha_1 z + \alpha_2 z^2 + \cdots + \alpha_m z^m\in\mathcal{P}_{m}$, we have
\begin{equation} \label{eq:explicitpolynomial}
 p_m(A) \vb = \alpha_0 \vb + \alpha_1 A \vb + \alpha_2 A^2 \vb + \cdots + \alpha_m A^m \vb.
\end{equation}
The evaluation of $p_m(A)\vb$ can be performed within $m$~matrix-vector products involving~$A$. This evaluation becomes particularly convenient when $p_m$ can be written as a sum of polynomials admitting a short-term recurrence, such as Chebyshev polynomials.  In order to use this technique, the given function $f$ needs to be approximated by a suitable polynomial $p_m$. It is already clear from~\eqref{eq:diagfA} that the approximation error $f-p_m$ should be small on a region containing the eigenvalues of $A$. Hence, such an approach requires a priori information on the spectrum of $A$ such as, for example, bounds on the smallest and the largest eigenvalues when $A$ is Hermitian. Care must be taken not only to construct a good polynomial approximant $p_m \approx f$ but also to perform the numerical evaluation of $p_m(A)\vb$ in a stable and efficient manner. On the positive side, methods based on explicit polynomial approximation are often simple to implement and well suited for parallel and distributed computing.
\end{paragraph}

\begin{paragraph}{Class 2: Restarted Krylov methods discussed in  Section~\ref{sec:krylov_restarts}.} \label{para:KSM}
The Krylov space of order $m$ associated with $(A,\vb)$ is defined as 
\[
\spK_{m}(A,\vb) := \Span\{\vb, A\vb, \ldots, A^{m-1}\vb \}\subseteq \mathbb{C}^N,
\]
Equivalently, $\spK_m(A,\vb)$ consists of the vectors $p(A) \vb$ for \emph{every} polynomial $p\in \mathcal{P}_{m-1}$. While this shows the close connection to polynomial approximation, there is a significant difference: Krylov methods select their  approximants $\vf_m\approx f(A)\vb$ from $\spK_m(A,\vb)$ by means of projection, \changed{with neither} a priori specification of the polynomial $p$ satisfying $\vf_m = p(A)\vb$ \changed{nor knowledge of the spectrum of $A$}. This process requires the construction of a basis for $\spK_m(A,\vb)$. For linear systems, solvers like CG and BiCGStab one can avoid the full use of such a basis by constructing simultaneously short-term recursions for the approximate solutions $\vf_m \approx A^{-1} b$~\cite{Saad2003}. Such short-term recursions are not available for general matrix functions, and other techniques are needed to limit the growing memory requirements for storing the basis of $\spK_m(A,\vb)$ as $m$ increases.  In Section~\ref{sec:krylov_restarts}, we describe two stable restarted Krylov approaches for matrix functions, which require only a limited amount of memory while (typically) still producing convergent approximations.
\end{paragraph}

\begin{paragraph}{Illustration of the key differences between the two classes.}
In order to motivate the above classifications, we give a brief discussion to differentiate between the methods. The generation of the orthonormal bases required by Krylov methods, in particular computing the required inner products for orthogonalization, may incur a high computational cost compared to explicit polynomial approximation, which does not require inner products. This aspect is particularly pertinent in distributed computing, e.g., on wireless sensor networks~\cite{Shuman2018}. Also, the storage requirements of explicit polynomial methods are generally lower than those of Krylov methods. On the other hand, Krylov methods can exhibit significantly faster convergence; i.e., they can attain the desired accuracy with a lower degree polynomial, even when compared to excellent choices for an explicit polynomial approximant. This effect, sometimes called ``spectral deflation,'' arises particularly with matrices that have well separated clusters of eigenvalues. 

We illustrate the spectral deflation effect in Figure~\ref{fig:approx}, where the left plot shows the two-norm error $\| f(A)\vb - p_m(A)\vb\|_2$ for degree~$m=1,2,\ldots,15$ approximants computed with two explicit  polynomial approximation methods (uniform and Chebyshev) and a Krylov method (full Arnoldi approximation).  The function to be approximated is $f(z) = z^{-1/2}$, and the matrix $A$ is \changed{ diagonal with equispaced eigenvalues in the interval $[1,10]$ and two outlier eigenvalues close to $z=16$ (with the eigenvalue positions indicated by the grey vertical lines). The vector $\mathbf{b}$ is chosen as the vector of all ones. The \emph{uniform} method corresponds to evaluating $p_m(A)\mathbf{b}$ for the best uniform approximant $p_m$ to $f$ on the spectral interval of $A$, i.e., the unique degree-$m$ polynomial that minimizes $\max_{z\in [1,16]} | f(z) - p_m(z)|$. (The polynomial $p_m$ was computed by the Remez algorithm \cite[Chapter~10]{trefethen2013approximation}.) The Chebyshev method corresponds to evaluating an interpolant $p_m$ to $f$ at $m+1$ Chebyshev points on $[1,16]$; this method is discussed in Section~\ref{subsec:approx} and Figure~\ref{fig:chebfun}. The Krylov approximation corresponds to the (unrestarted) Lanczos method, which is discussed in Section~\ref{sec:krylov_restarts}. Looking at Figure~\ref{fig:approx}, we first} note how the convergence of the Krylov method accelerates superlinearly as the degree $m$ exceeds $7$. This behavior is explained with the plot on the right-hand side showing the scalar error function $|f(z) - p_7(z)|$ of the polynomials of degree $7$ underlying each of the three approximants. While both the ``uniform'' and ``Chebyshev'' polynomials are (almost) uniformly small on $A$'s spectral interval $[1,16]$, the Krylov polynomial attains a particularly small error in regions where $\Lambda(A)$ is located, which is a smaller set than $[1,16]$. In particular the two outliers close to $z=16$ appear to be interpolated almost exactly (the error curve has values close to zero there). Spectral deflation effects are well understood for the classical CG method (a polynomial Krylov method with $f(z)=z^{-1}$), and they also arise and have been analyzed in the Krylov approximation of general matrix functions \cite{beckermann2001superlinear, BeckermannGuettel2012}.

\end{paragraph}

\captionsetup{width=.87\textwidth}
\begin{figure}
\hspace*{0mm}
\includegraphics[scale=.53]{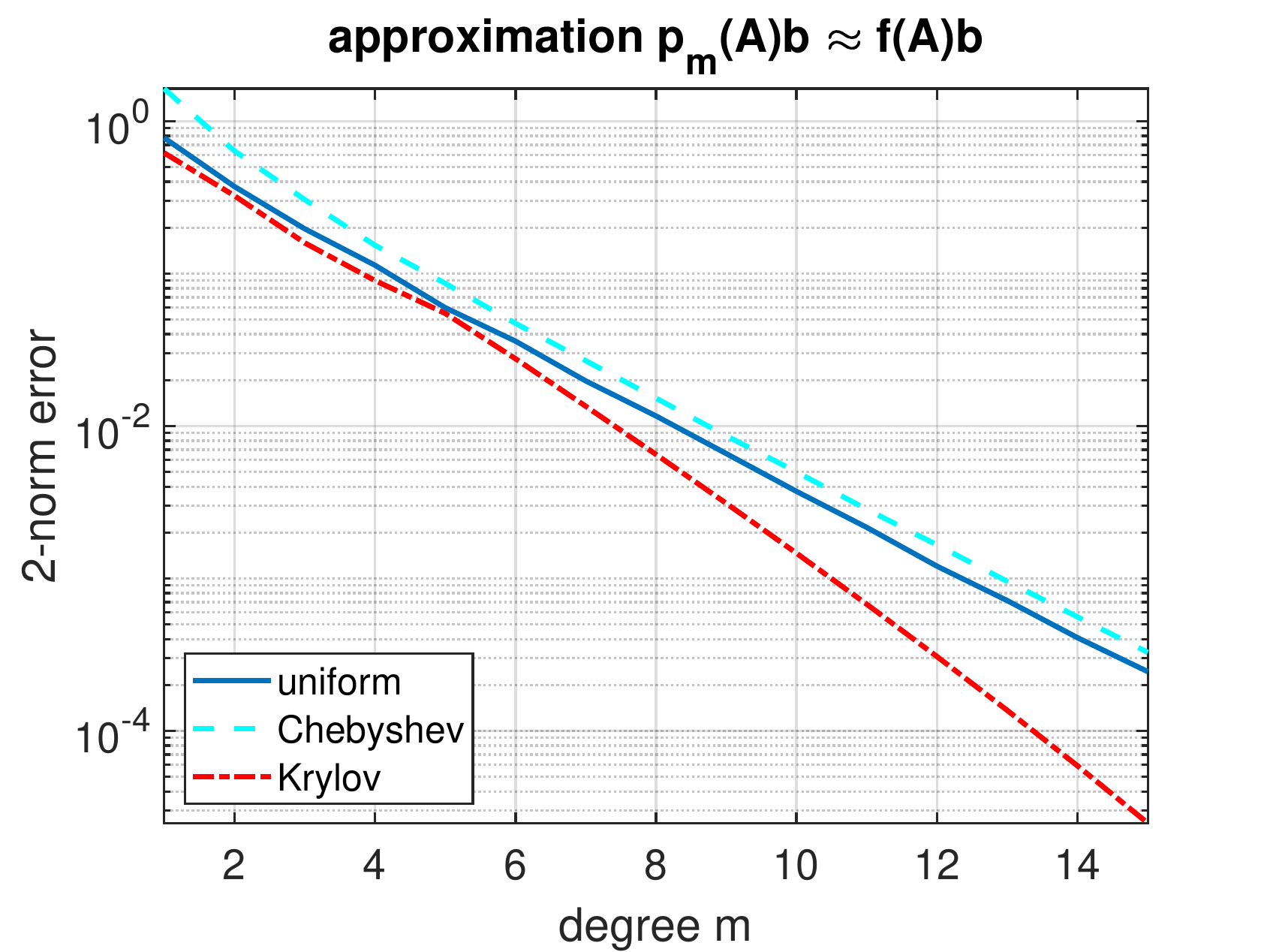}
\includegraphics[scale=.53]{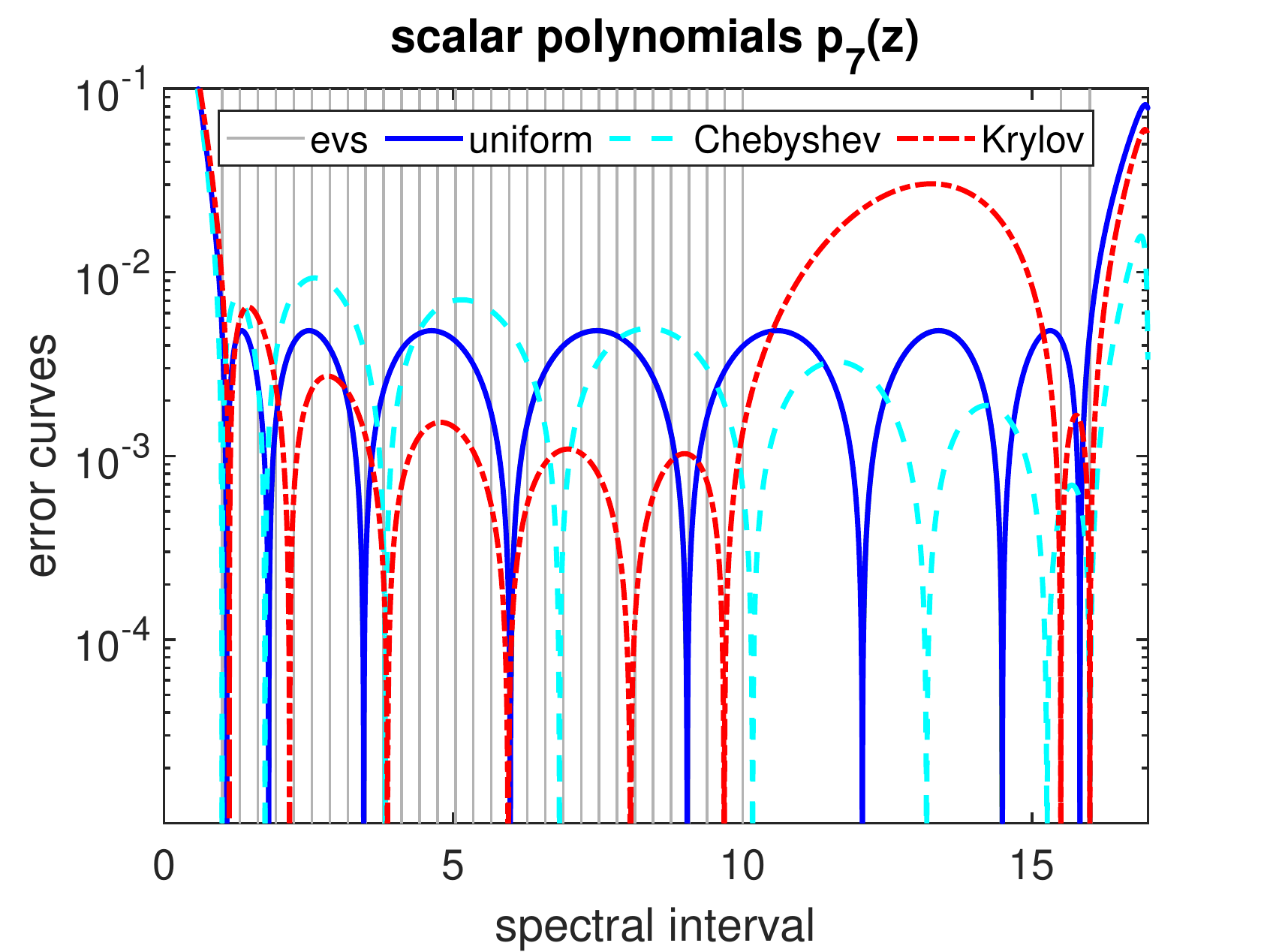}
\caption{Left: Convergence of three polynomial methods for approximating $A^{-1/2}\vb$ for a matrix $A$ with spectral interval $[1,16]$. Right: Error curves $|z^{-1/2}-p_m(z)|$ of the polynomials $p_m$ associated with the approximants of degree $m=7$. The grey vertical lines show the positions of the eigenvalues of $A$. \label{fig:approx}}
\end{figure}

\begin{paragraph}{Further reading.} Necessarily, this survey only covers a limited scope of the existing literature on matrix functions. The monograph by Higham~\cite{Higham2008} gives an authoritative and comprehensive overview of methods for computing $f(A)$ for small- to medium-sized matrices $A$.
The survey by Frommer and Simoncini~\cite{FrommerSimoncini2008a} puts an emphasis on methods for the matrix exponential and the matrix sign function.
Hochbruck and Ostermann~\cite{HochbruckOstermann2010} discuss various methods for $f(A)\vb$ in the context of exponential integrators for (discretized) time-dependent PDEs. The survey by G\"uttel \cite{Guettel2013} covers rational Krylov methods. The recent PhD theses by Schweitzer~\cite{Schweitzer2015} and Lund~\cite{Lund2018} develop algorithms and frameworks for quadrature-based restarting methods and block versions thereof, respectively. The survey by Higham and Al-Mohy \cite[Section~5 and~8]{HighamAlMohy2010} contains some material on the $f(A)\vb$ problem and also makes the distinction between ``a priori approximation'' and Krylov methods. An overview of software for computing matrix functions can be found in~\cite{HighamHopkins2020}; see also the ends of Sections~\ref{sec:expansions} and \ref{sec:krylov_restarts} of this paper. In this issue of GAMM Mitteilungen, Benzi and Boito \cite{benziBoito} and Stoll \cite[Section~5]{stoll} discuss (generalized) matrix functions in the context of network centrality measures \changed{and other data science applications}. 
\end{paragraph}

\section{Selected applications} \label{sec:applications}
We focus on applications that give rise to extremely large matrices, particularly matrices $A$ for which the solution of a linear system $A \vx = \vb$ is itself challenging.  Such matrices include those resulting from discretizations of three-dimensional partial differential equations (PDEs), matrices that are dense but highly structured, or matrices that are so large they can only be stored and accessed in a distributed manner. For a more comprehensive list of matrix function applications, see, e.g., Chapter 2 of Higham\cite{Higham2008}. 

\begin{paragraph}{Differential equations.}
Time-dependent PDEs are a classical and frequent source of large-scale matrix functions. For example, the spatial discretization of the instationary heat equation on a bounded domain $\Omega \subset \R^d$ by a finite difference method leads to a system of ordinary differential equations of the form
\begin{equation} \label{eq:laplace}
{\vx'}(t) = -A \vx(t), \quad \vx(t_0) = \vx_0,
\end{equation}
with a symmetric positive definite matrix $A$. The solution of~\eqref{eq:laplace} admits the explicit representation $\vx(t) = \exp(-tA)\vx_0$. The matrix~$A$ represents the discretization of the (unbounded) Laplace operator on $\Omega$, and as such it is symmetric, large, and sparse. It also tends to have a wide spectrum, which hampers the convergence of (polynomial) Krylov methods~\cite{HochbruckLubich1997} for the matrix exponential, while rational Krylov methods are immune to this problem~\cite{Guettel2013}. The solution of (shifted) linear systems with $A$ required for the latter is a routine calculation for one- and two-dimensional PDEs ($d=1,2$), but this becomes significantly more challenging for $d =3$ and larger~\cite{duff2017direct}.

There exist many extensions and variations of~\eqref{eq:laplace} that lead to matrix functions. For example, the discretization and linearization of a nonlinear PDEs leads to a semilinear equation
\[
{\vx'}(t) = -A \vx(t) + g(\vx(t)), \quad \vx(t_0) = \vx_0.
\]
Exponential integrators proceed by integrating this equation over a (small) time interval $h>0$ and using quadrature to approximate the integral involving the nonlinearity~$g$. In turn, these methods invoke the exponential as well as the so-called $\varphi$-functions of $-hA$; see~\cite{BotchevKnizhnerman2020, HochbruckOstermann2010, MichelsSobottkaWeber2014} for recent work in this direction.
Fractional PDEs involve matrix functions of the form $A^{-\alpha}$ and $\exp(-hA^\alpha)$ for some rational $\alpha>0$, such as $\alpha = 1/2$; see, e.g., \cite{Bolin2018, Burrage2012, CiegisStarikoviciusMargenov2017, HarizanovLazarovMargenov2018, Novati2014}. Matrix square roots
arise from the discretization of Dirichlet-to-Neumann maps, which are useful in, e.g., dealing with Helmholtz problems in unbounded domains~\cite{Druskin2016}. Fractional operators have also been used to regularize ill-posed problems~\cite{Hochstenbach2011}. The wave and time-dependent Schr\"odinger equations~\cite{Grimm2008,Lubich2008} lead to matrix functions of the form $\sin(hA^{1/2})$, $\cos(hA^{1/2})$, $\exp(-\mathrm{i}h A)$, which are notoriously difficult to approximate for Krylov methods due to the highly oscillatory nature of these functions on the spectral interval.

Other discretizations of PDEs may come with further challenges. Finite element discretizations additionally involve a mass matrix $M$; that is, instead of~\eqref{eq:laplace} one obtains $M {\vx'}(t) = -A \vx(t)$. Mass lumping is a common technique to avoid inversion with $M$ in Krylov methods~\cite{Gander2013}. Boundary element methods or, more generally, the discretization of surface and volume integral equations lead to dense matrices~\cite{Sauter2011}. While the fast multipole method~\cite{Coifman1993} is an established technique for efficiently multiplying such matrices with vectors, solving linear systems is considerably more intricate and needs to utilize hierarchical matrices \cite{Hackbusch2015} or related techniques.

Differential equations on four-dimensional computational domains may arise, for example, from space-time formulations of time-dependent PDEs. Usually, such a formulation  does not lead to matrix functions, but there is an important exception: in lattice quantum chromodynamics (QCD) the solutions of linear systems involving the matrix square root or the matrix sign function are needed \cite{BlochBreuFrommer2009, FrommerLundSzyld2020, Hoelbling2014, EshofFrommerLippert2002}. Except for toy examples, it is a futile attempt to apply sparse direct solvers to such matrices. PDEs of dimensions higher than four, such as the electronic Schr\"odinger equation~\cite{Lubich2008}, require additional approximation techniques, beyond the scope of Krylov methods.
\end{paragraph}

\begin{paragraph}{Applications in data analysis.}
Graph centrality measures~\cite{benziBoito}, signal processing on graphs~\cite{Shuman2013}, and graph learning~\cite{ThanouDongKressner2017} are applications that require the evaluation of matrix functions for the adjacency matrix or the Laplacian of an undirected graph. While they are discussed in more detail elsewhere in this issue\cite{benziBoito,stoll}, let us stress that these data analysis applications often feature complex graphs associated with matrices that are usually not amenable to sparse direct solvers. Indeed, the Graph Signal Processing Toolbox~\cite{perraudin2014gspbox} always uses polynomial approximations (e.g., via the Lanczos method~\cite{susnjara2015accelerated}), even for low-degree rational matrix functions. \changed{Other applications, such as clustering via geometric means~\cite{NIPS2016_6164}, require the evaluation of expressions of the form $f(A^{-1}Bv)$ and it is not clear how polynomial based methods can be used most effectively  for that purpose.}
\end{paragraph}

\begin{paragraph}{Spectral projectors and density.}
Approximations of matrix functions feature prominently in solvers for large-scale eigenvalue problems. Consider a symmetric matrix $A$, and a function $\chi_{[\alpha,\beta]}$ that is $1$ on some interval $[\alpha,\beta] \subset \mathbb R$ and $0$ elsewhere. Then, by the definition~\eqref{eq:diagfA}, the matrix function $\chi_{[\alpha,\beta]}(A)$ is the spectral projector on the invariant subspace associated with the eigenvalues  in the interval $[\alpha,\beta]$. In turn, applying $\chi_{[\alpha,\beta]}(A)$ to a vector filters components not belonging to the invariant subspace of interest. A polynomial approximation of $\chi_{[\alpha,\beta]}$ is used, e.g., to accelerate eigenvalue solvers~\cite{Bekas2008,Pieper2016}. The trace of $\chi_{[\alpha,\beta]}(A)$ equals the number of eigenvalues in the interval, and polynomial approximations~\cite{Lin2016} of such eigenvalue counts can be used to slice the spectrum in a way favorable for parallel computation~\cite{Li2019}. By contour integrals, the discussion can, in principle, be extended to general nonsymmetric matrices.
\end{paragraph}

\section{Expansion-based methods} \label{sec:expansions}
Polynomial expansion-based methods are characterized by the property that the polynomial $p_m\in \mathcal{P}_{m}$ underlying the approximation  $\mathbf{f}_{m} = p_m(A)\vb\approx f(A)\vb$ is specified a priori based on the function $f$ and the spectral properties of the matrix~$A$. (The vector $\vb$ is usually not used for the specification of $p_m$ in these methods.) The main advantages of these methods are their low memory consumption (through the use of short-term recurrences), easy implementation and parallelization (as no inner products are required), straightforward extension to actions on block vectors, and the availability of explicit a priori error bounds. On the other hand, expansion-based methods require some advance information about the spectral properties of $A$, and they do not exhibit spectral adaptivity or benefit from deflation like the Krylov   methods discussed in Section~\ref{sec:krylov_restarts}.

In this section we review the short-term recurrences underlying the most commonly used expansion-based methods (Section~\ref{subsec:recur}), followed by a discussion of the relevant approximation theory to construct the underlying polynomials  (Section~\ref{subsec:approx}). We then give a brief overview of convergence properties (Section~\ref{subsec:conv}) and conclude with a list of available software (Section~\ref{subsec:soft}).

\subsection{Short-term recurrences}\label{subsec:recur}
Expansion-based methods utilize explicit polynomial approximants of functions. Of particular interest are polynomial approximants that can be written in the form $p_m(A)\vb = \sum_{j=0}^{m} \gamma_j \vq_j$ with the vectors $\vq_j$ generated by a short-term recurrence. For example, if 
\[
	q_{0}(z) :\equiv 1, \quad q_1(z) := \alpha_1(z), \quad \text{and}\quad  q_{j}(z) := \alpha_j(z) q_{j-1}(z) - \beta_j q_{j-2}(z), \quad j=2,\ldots,m
\]
is a short-term recurrence with linear nonconstant polynomials $\alpha_j(z)$ and numbers $\beta_j$, this  translates  into a polynomial short-term recurrence 
\[
	\vq_{0} := \vb, \quad \vq_{1} := \alpha_1(A)\vb, \quad \vq_{j} := \alpha_j(A) \vq_{j-1} - \beta_j \vq_{j-2}.
\]
This forward recursion can be intertwined with the summation to form $p_m(A)\vb$, a procedure that is often referred to as Forsythe's algorithm \cite{forsythe1957generation}. Alternatively, the summation can be performed in reverse direction using Clenshaw's algorithm \cite{clenshaw1955note}:
\[
	\mathbf{c}_{m+1} := \mathbf{0}, \quad \mathbf{c}_{m}:=\gamma_m \mathbf{b}, 
\]
\[
	\mathbf{c}_{j} := \gamma_j\mathbf{b} + \alpha_{j+1}(A)\mathbf{c}_{j+1} - \beta_{j+2} \mathbf{c}_{j+2}, \quad j = m-1,\ldots,0.
\]
Eventually, $\mathbf{c}_0 = p_m(A)\mathbf b$. Both algorithms  require the same number of $m$ matrix-vector products and limited memory as at most three or four vectors need to be stored at any time, respectively. A computational advantage of using  Forsythe's algorithm over Clenshaw's is that one can synchronously sum polynomials $p_m^{(k)}(A)\vb$ with different coefficients~$\gamma_j^{(k)}$, enabling the approximation of multiple matrix functions $f^{(k)}(A)\vb$ without increasing the number of matrix-vector multiplications. Both summation algorithms generalize straightforwardly to the block case, that is, to the approximation of $f(A) B$, where $B \in \mathbb{C}^{N\times s}$ is a ``block'' of $s$ vectors.

Clenshaw's algorithm has been analyzed by Smoktunowicz~\cite{smoktunowicz2002backward} for the evaluation of scalar polynomials and found to be backward stable under natural conditions on the coefficients $\alpha_j,\beta_j$ and the argument~$z$, at which the recursion is evaluated.  A forward stability analysis and comparison of both algorithms for the summation of scalar polynomial series is given by Barrio~\cite{barrio2002rounding}. Among other cases, he considers the Chebyshev polynomials $q_j = T_j$ of the first kind, defined as 
\[
	T_0(z) = 1, \quad T_1(z) = z, \quad T_j(z) = 2z T_{j-1}(z) - T_{j-2}(z), 
\]
and numerically summed for an argument $z\in [-1,1]$ to obtain an approximate evaluation $\widetilde p_m(z)$. It is found that both summation algorithms exhibit a forward error $|p_m(z) - \widetilde p_m(z)|$ of $O(m)$ for arguments near $z=0$, and an error of $O(m^2)$ with Clenshaw and $O(m^3)$ with Forsythe, respectively, near the interval endpoints $|z|=1$. Backward stability has also been verified for the Clenshaw vector recursion by Aurentz et al., \cite[Theorem~4.1]{aurentz2019stable} when the $q_j$ are  Chebyshev polynomials which are scaled and shifted to the spectral interval of a Hermitian matrix. (More precisely, the authors deal with the approximation of generalized matrix functions defined as $f^\diamond(B) :=  U f(\Sigma)V^*$ where $B = U\Sigma V^*$ is a singular value decomposition of $B$. A Chebyshev interpolation method using Clenshaw's algorithm with a matrix argument $z=A=B B^*$ is used.)

\subsection{Approximation and interpolation}\label{subsec:approx}
There are various ways the polynomial $p_m$ can be determined given $f$ and $A$, with the most important approach being scalar \emph{approximation} (e.g., uniform or in least-squares sense) or \emph{interpolation}. If the matrix $A$ is diagonalizable, $A = X \diag(\lambda_1,\ldots,\lambda_N)X^{-1}$, and we assume further that  $\Sigma\subset \mathbb{C}$ is a compact set containing $\Lambda(A)$, then we have a simple bound on the approximation error as follows:
\begin{eqnarray*}
	\| f(A)\vb - p_m(A)\vb\|_2 &\leq& \kappa_2(X)\, \|\vb\|_2 \max_{i=1,\ldots,N} | f(\lambda_i) - p_m(\lambda_i) |\\
	 &\leq& \kappa_2(X)\, \|\vb\|_2 \max_{z\in\Sigma} | f(z) - p_m(z)| \\
	 &=:& \kappa_2(X)\,  \|\vb\|_2 \,\| f-p_m\|_\Sigma,
\end{eqnarray*}
with the two-norm condition number $\kappa_2(X) := \| X \|_2 \| X^{-1} \|_2$. 
More generally, scalar approximation can be related to matrix function approximation if there exists a constant $K>0$ and a compact set $\Sigma\subset\mathbb{C}$ such that $\|p(A)\|_2 \leq K \| p \|_\Sigma$ for all polynomials~$p$. Such a set $\Sigma$ is called a $K$-spectral set \cite{badea14}. It is known~\cite{crouzeix2017numerical} that for an arbitrary square matrix $A$, the numerical range $W(A) := \{ \vv^* A\vv : \| \vv\|_2 = 1\} \subset \mathbb{C}$ is a $K$-spectral set with $K\leq 1+\sqrt{2}$ (conjectured to be $K=2$). Assuming that $f$ is analytic on $\Sigma$, we have
\[
	\|f(A)\vb - p_m(A)\vb \|_2 \leq K\|\vb\|_2  \| f - p_m \|_{W(A)}. 
\]
Hence, in the absence of other explicit knowledge about spectral properties of $A$, it is reasonable to construct $p_m$ as a uniform approximant to $f$ on $\Sigma$. 

Early examples of methods based on expansions of $f$  into orthogonal polynomials on an interval are studied by Tal-Ezer and Kosloff \cite{tal1984accurate}, wherein $f(z) = \exp(z)$  for propagating the time-dependent Schr\"odinger equation ($z\in [ -\mathrm{i}d,\mathrm{i} d]$ with $d>0$), and by Druskin and Knizhnerman \cite{DruskinKnizhnerman1989} for solving constant-coefficient linear homogeneous PDEs  ($z\in [-d,0]$ with $d>0$ for parabolic problems like the heat equation). In both cases, shifted and scaled Chebyshev polynomials $q_j$ are used. More precisely, assuming that $A$ is a normal matrix with eigenvalues in a (possibly complex) interval $[c,d]$, we  consider the linear transform $\ell(z) = (2z-c-d)/(d-c)$ such that $\widehat A := \ell(A)$ has eigenvalues in $[-1,1]$, and then approximate $f(A)\vb = f(\ell^{-1}(\widehat A))\vb$ by a truncated Chebyshev expansion of $\widehat f(z) := f(\ell^{-1}(z))$ on $[-1,1]$:
\[
p_m(A)\vb = \sum_{j=0}^m \gamma_j T_j(\widehat A) \vb \approx f(A)\vb
\]
with Chebyshev coefficients 
\[
	\gamma_j = \frac{2}{\pi} \int_{-1}^1  \frac{\widehat f(z) T_j(z)}{\sqrt{1-z^2}} \d z,
\]
except for $\gamma_0$ where the factor $2/\pi$ is changed to $1/\pi$. For many functions of practical interest, closed formulas for the Chebyshev coefficients $\gamma_j$ are known, including the exponential function, for which they can be given in terms of Bessel functions. The case of Chebyshev expansions of nonanalytic functions has been discussed by Sharon and Shkolnisky~\cite{SharonShkolnisky2018}.

We refer to Trefethen's text\cite{trefethen2013approximation} for an in-depth discussion of the approximation theory of Chebyshev polynomials on intervals. We highlight that, for practical purposes, the use of Chebyshev interpolants instead of expansions might be preferable. The Chebyshev interpolant $\widehat p_m(z) = \sum_{j=0}^m \widehat \gamma_j T_j(z)$ of degree $m$ is defined as the unique interpolating polynomial of $\widehat f$ at the Chebyshev points $x_j = \cos(j\pi/m)$, $j=0,1,\ldots,m$. The coefficients $\widehat \gamma_j$ are readily computed using the fast Fourier transform. We give a basic MATLAB implementation of the resulting $f(A)\vb$ method in Figure~\ref{fig:chebfun}.

\begin{figure}
\hspace*{18mm}
\hspace*{15mm}\begin{verbatim}
      c = 0; d = 40i; m = 35; % spectral interval and degree
      f = @(z) exp(z); A = 10i*gallery('tridiag',100); b = eye(100,1);
      % compute coefficients of Chebyshev interpolant:
      x = .5*(cos((0:m)*pi/m)+1)*(d-c)+c; fx = f(x);
      gam = fft([fx,fx(m:-1:2)])/m; gam(1) = gam(1)/2; gam(m+1) = gam(m+1)/2;
      AA = (2/(d-c))*A - (c+d)/(d-c)*speye(size(A));
      % perform Clenshaw summation to obtain approximant fAb:
      c2 = 0*b; c1 = gam(m+1)*b;
      for j = m-1:-1:0
         if j==0, fAb = gam(j+1)*b + AA*c1 - c2;
         else, c0 = gam(j+1)*b + 2*(AA*c1) - c2; c2 = c1; c1 = c0; end
      end
      norm(fAb - expm(A)*b)/norm(fAb) % 4.2038e-07
\end{verbatim}
\caption{Computation and evaluation of a degree~$m=35$ Chebyshev interpolant $p_m(A)\vb \approx f(A)\vb$ for $f(z)=\exp(z)$ on a spectral interval $[c,d]$.}\label{fig:chebfun}
\end{figure}

Alternatives to the above procedures are discussed, e.g., by Chen et\ al.\cite{ChenAnitescuSaad2011}, who use least-squares approximation to obtain suitable expansion coefficients, and Moret and Novati\cite{MoretNovati2001b}, who use truncated Faber expansions for approximating functions of real nonsymmetric matrices.
\changed{Beckermann and Reichel~\cite{BeckermannReichel2009} analyze such Faber expansions, establishing -- among others -- elegant bounds for the matrix exponential in terms of the angle of the right-most corner of the numerical range. In the absence of such a corner, partial Faber sums may exhibit slow (initial) convergence. When it comes to using these expansion-based procedures in practise, estimates for the numerical range of $A$ are required.}

There are expansion-based algorithms that, instead of approximating $f(A)\vb$ directly, first apply a linear transformation $\widetilde A  = (A - \sigma I)/s$ to $A$,  (repeatedly) compute $\widetilde f(\widetilde A)\widetilde \vb$, and use functional identities to relate the results to $f(A) \vb$. For example, with $s$ being a positive integer, the approximation of $\exp(A)\mathbf b$ can be reduced to $s$~computations of exponentials of $\widetilde A$ using the identity
\[
	\exp(A) \vb = \exp(s\widetilde A + \sigma I) \vb = e^\sigma  [\exp(\widetilde A)]^s  \vb.
\]
Such a splitting is reminiscent of a time-stepping method for the linear ODE $\mathbf{x}' = A\mathbf{x}$, $\mathbf{x}(0)=\mathbf{b}$. It is at the core of various algorithms for the expansion-based approximation of $\exp(A)\vb$, such as that of Al-Mohy and Higham \cite{AlMohyHigham2011}, who use truncated Taylor series, therefore resulting in a method equivalent to an explicit Runge--Kutta method. The selection of the scaling parameter~$s$ and the degree of the Taylor polynomial applied to approximate $\exp(\widetilde A)$ is based on a forward error analysis and a sequence of the form $\|A^k\|^{1/k}$ in such a way that the overall computational cost of the algorithm is minimized. This scaling approach might result in a rather large total number of matrix-vector products roughly proportional to $\|A\|$. Similarly, Al-Mohy\cite{AlMohy2017} derives an algorithm for computing $\cos$, $\sinc$, and other trigonometric matrix functions. Here, the ``unscaling'' is done by using recurrences of the Chebyshev polynomials of the first and second kind. 

Apart from orthogonal polynomials (three-term recurrence) and monomials in Taylor series (one-term recurrence), another class of methods for the approximation of matrix functions is based on Newton polynomial interpolation (two-term recurrence). Early references for this idea include Berman et\ al.~\cite{berman1992solution} and Moret and Novati \cite{MoretNovati2001a} (with interpolation nodes obtained via conformal mapping) and Huisinga et\ al.~\cite{huisinga1999faber} (interpolation at Leja nodes).  The algorithmic aspects of this approach have been investigated and improved in a number of works, in particular for the matrix exponential function \cite{CaliariVianelloBergamaschi2004,CaliariKandolfOstermann2016,
CaliariKandolOstermann2014,KandolfOstermannRainer2014}.

We briefly outline  the basic idea underlying Leja interpolation on some  compact set $\Sigma\subset \mathbb{C}$. Starting with a node $\sigma_0\in\Sigma$ such that $|\sigma_0| = \max_{z\in\Sigma} |z|$, we choose a Leja sequence of nodes such that 
\begin{equation}\label{eq:leja}
\sigma_j \in \mathrm{arg\,max}_{z\in\Sigma} \prod_{k=0}^{j-1} \left| z - \sigma_k \right|, \quad j = 1,2,\ldots
\end{equation}
With these nodes we can associate a sequence of interpolating polynomials $p_0(z) \equiv f(\sigma_0)$,
\[
	p_j(z) = p_{j-1}(z) + f_{j} w_{j-1}(z), \quad j = 1,2,\ldots
\]
with the nodal polynomial $w_j(z) := \prod_{k=0}^{j} ( z - \sigma_k )$ and the divided difference $f_j := (f(\sigma_j) - p_{j-1}(\sigma_j))/w_{j-1}(\sigma_j)$ of the function $f$ at the (distinct) nodes $\sigma_0,\ldots, \sigma_j$. This scheme can be straightforwardly adapted for the interpolatory approximation of $f(A)\vb$ as follows. Define $\mathbf p_0 := f(\sigma_0)\mathbf b$, $\mathbf{w}_0 := \mathbf{b}$, and the recursion
\[
	\mathbf{p}_j :=  \mathbf{p}_{j-1} + f_j \mathbf{w}_{j-1}, \quad \mathbf{w}_j := (A - \sigma_j)\mathbf{w}_{j-1}, \quad j = 1,2,\ldots.
\] 
After $m$ steps, we have $\mathbf{p}_m = p_m(A)\mathbf{b}$. However, the naive implementation of this recursion may result in numerical cancellations and over or underflow. A careful shifting and scaling of the matrix $A$, combined with an appropriate time-stepping procedure (if applicable, as in the case of the matrix exponential), can turn Leja interpolation into an attractive computational method.

\subsection{Convergence analysis}\label{subsec:conv}
Let us consider a polynomial interpolation process with interpolation nodes $\sigma_j$ on a compact set $\Sigma\subset \mathbb{C}$. We also assume that the function $f$ to be interpolated is analytic on an open set $\Omega\supset \Sigma$.
Using again the nodal polynomial $w_m(z) = \prod_{k=0}^{m} ( z - \sigma_k )$, the unique polynomial interpolant $p_m$ of degree $m$ for $f$ at the nodes $\sigma_j$ satisfies
\begin{equation}\label{eq:hermerr}
   f(z) - p_m(z) = \frac{1}{2\pi \mathrm{i}} \int_\Gamma \frac{f(\zeta)}{(\zeta - z)} \frac{w_m(z)}{w_m(\zeta)} \d\zeta,\quad z\in\Sigma,
\end{equation}
where $\Gamma\subset\Omega$ is a contour containing $\Sigma$ in its interior. In order to bound this error uniformly on $\Sigma$, the growth of $w_m$ on $\Sigma$ and $\Gamma$ needs to be studied. If $\mathbb{C}\setminus \Sigma$ is connected, this can be done conveniently using the level lines of the Green's function $G(x,y)$, i.e., the unique harmonic function defined for all $z = x + \mathrm{i} y \in \mathbb{C}\setminus \Sigma$ with a pole at infinity, zero boundary value on $\partial \Sigma$, and normalized such that $\frac{1}{2\pi} \int_{\partial \Sigma} \frac{\partial G}{\partial n} (x,y) \d s = 1$. Reichel \cite[Lemma~2.3]{reichel1990newton} shows that for polynomial interpolants $p_m$ using the Leja nodes defined in \eqref{eq:leja} we have  
\[
	\mathrm{lim\,sup}_{m\to\infty} \| f - p_m \|_\Sigma^{1/m} = e^{-\rho}, 
\]
where $\rho > 0$ is the largest constant such that $f$ is analytic and single-valued  in the interior of the level curve $L_\rho = \{ z = x + iy : G(x,y) = \rho \}$. Moreover, there is no sequence of polynomials $q_m$ such that $\mathrm{lim\,sup}_{m\to\infty} \| f - q_m \|_\Sigma^{1/m} < e^{-\rho}$, hence the linear convergence factor $e^{-\rho}$ is optimal. We say that the Leja interpolants are maximally convergent. The same optimal convergence factor is achieved by best uniform polynomial approximants for $f$ on $\Sigma$, as well as by truncated Faber expansions \cite{MoretNovati2001b}, interpolants in roots of Faber polynomials \cite{MoretNovati2001a} or Fej{\`{e}}r points \cite{Novati2003} (applicable when $\mathbb{C}\setminus\Sigma$ is simply connected). For a comparison of Chebyshev and Leja approximation, see Baglama et\ al.\cite{baglama1998fast}.

If $f$ is an entire function, superlinear convergence will take place. In this case \eqref{eq:hermerr} can also be used to study the uniform  error $\| f - p_m\|_\Sigma$, but now $\Gamma$ needs to be chosen so that the growth of $f$ is balanced with the decay of $1/w_m(\zeta)$ on $\Gamma$. The superlinear convergence bounds of polynomial interpolation for $f(z) = e^z$ in \cite{HochbruckLubich1997} are derived using this technique (see also \cite{CaliariVianelloBergamaschi2004}). 






\subsection{Software}\label{subsec:soft}

Here is a list of available software  for approximating matrix functions $f(A)\vb$ based on expansions. See also the \changed{recent} report by \changed{Higham and Hopkins~\cite{HighamHopkins2020}} for a more comprehensive list.
\begin{itemize}
	\item \texttt{expmv}: \url{https://github.com/higham/expmv}. A \textsc{Matlab} implementation of the Taylor series-based method for $\exp(tA)\vb$ described in \cite{AlMohyHigham2011}.  A Java version can be found at \url{https://github.com/armanbilge/AMH11}.
	\item \texttt{ExpLeja}: \url{https://bitbucket.org/expleja/expleja/src/default/}. A \textsc{Matlab} implementation of a Leja method for $\exp(tA)\vb$, with the details described in \cite{CaliariKandolfOstermann2016}.
	\item \texttt{rcexpmv}: \url{https://www.mathworks.com/matlabcentral/fileexchange/28199-matrix-exponential}. A \textsc{Matlab} implementation of (rational) Chebyshev interpolation of $\exp(A)\vb$, applicable when $A$ is Hermitian negative semidefinite.
\end{itemize}

\section{Restarted Krylov  methods} \label{sec:krylov_restarts}

Krylov methods admit another type of polynomial approximation $p(A)\vb\approx f(A)\vb$, but instead of determining the polynomial~$p$ a priori, they compute the polynomial implicitly based on a basis-building process and the choice of Galerkin, Petrov--Galerkin, or other conditions for extracting an approximation from a Krylov subspace.  \changed{We emphasize again that no spectral knowledge is required a priori.}

Recall the definition for the Krylov space of order~$m$ associated with $(A,\vb)$, 
$\spK_m(A,\vb) := \Span\{\vb, A\vb, \ldots, A^{m-1}\vb \}\subseteq \mathbb{C}^N$. We compute $m$ orthonormal vectors spanning $\spK_{m}(A,\vb)$ and stored as the columns of $V_{m} \in \C^{N \times m}$, ordered such that $\beta V_m\ve_1 = \vb$ where $\beta = \|\vb\|_2$, as well as the projection and restriction of $A$ onto $\spK_m(A,\vb)$, represented by the upper-Hessenberg matrix $H_m = V_m^* A V_m\in \C^{m \times m}$, satisfying
\begin{equation} \label{eq:arnoldi_relation}
A V_m = V_{m+1} \Hunder_m = V_m H_m + \vv_{m+1} h_{m+1,m} \ve_m^T.
\end{equation}
Here, $\ve_1$ and $\ve_m$ refer to the first and $m$-th canonical unit vectors, respectively, and $V_{m+1} = [ V_m , \mathbf{v}_{m+1} ]$ and $\Hunder_m = \begin{bmatrix} H_m \\ h_{m+1,m} \ve_m^T \end{bmatrix} \in \C^{(m+1) \times m}$. 
Equation~\eqref{eq:arnoldi_relation} is referred to as an \emph{Arnoldi decomposition}, after the Arnoldi procedure given as Algorithm~\ref{alg:arnoldi}.

\begin{algorithm}[htbp!]
	\caption{Arnoldi procedure \label{alg:arnoldi}}
	\begin{algorithmic}
		\State Given $A\in\C^{N\times N}$,  $\vb\in\C^N $, integer $m\geq 1$
		\State Compute $\vv_1 \leftarrow \vb/\beta$ with $\beta = \norm{\vb}_2$
		\For{$k = 1, \ldots, m$}
			\State $\vw \leftarrow A \vv_{k}$
			\For{$j = 1, \ldots, k$}
				\State $h_{j,k} \leftarrow \vv_j^* \vw$
				\State $\vw \leftarrow \vw - h_{j,k} \vv_j$
			\EndFor
			\State $h_{k+1,k} \leftarrow \norm{\vw}_2$
			\State $\vv_{k+1} \leftarrow \vw / h_{k+1,k}$
		\EndFor
		\State Return $V_{m+1} = [\vv_1, \vv_2, \ldots , \vv_{m+1}]$ and $\Hunder_m = [ h_{jk} ]$
	\end{algorithmic}
\end{algorithm}

The Arnoldi approximation to $f(A)\vb$ \cite{Saad1992, FrommerGuettelSchweitzer2014a} is defined as
\begin{equation} \label{eq:arnoldi_f(A)b}
\vf_m := V_m f(V_m^* A V_m) V_m^* \vb =  \beta V_m f(H_m) \ve_1 .
\end{equation}
This approximation is also known as the Ritz \cite{HochbruckHochstenbach2005} or Lanczos (if $A$ is Hermitian) \cite{DruskinKnizhnerman1989} approximation, and it can be thought of as a generalization of the full orthogonalization method (FOM) to general functions $f$, since when $f(z) = z^\inv$, the FOM approximation is recovered.  With modifications to $H_m$, a GMRES-like or \emph{harmonic} approximation \cite{HochbruckHochstenbach2005, FrommerGuettelSchweitzer2014b} can be implemented, as well as a \emph{Radau--Lanczos} approximation \cite{FrommerLundSchweitzer2017}, which allows one to prescribe eigenvalues of $H_m$.  \changed{Even more general approaches are possible via Krylov-like and Arnoldi-like decompositions \cite{Stewart2001, EiermannErnst2006, EiermannErnstGuettel2011}, which allow for non-orthogonal bases.  Indeed, this more general framework is useful from a conceptual point of view when developing restarts; cf.\ equations~\eqref{eq:arnoldi-like_relation}-\eqref{eq:f2m_v1}.}

As the basis $V_m$ grows, the Arnoldi approximation $\vf_m \approx f(A)\vb$ typically becomes more accurate.  However, when we are in a limited-memory scenario, it may not be possible to store enough basis vectors to reach the desired accuracy.  Short-term recurrences via the Lanczos procedure are generally not useful, as the entire basis $V_m$ is needed to compute~\eqref{eq:arnoldi_f(A)b}.  A two-pass procedure for Hermitian $A$ is however possible, as long as we have an efficient enough implementation to tolerate the high computational costs of computing the Krylov basis twice.  The procedure is rather simple. On the first pass, only $H_m$ is stored and the coefficient vector $\vy: = \beta f(H_m) \ve_1$ is computed, while on the second pass, the approximation $\vf_m = V_m \vy = \sum_{k=1}^{m} \vv_k y_k$ is computed one summand at a time as the basis is generated anew via the Lanczos process. It is important to note that no (full) reorthogonalization can be performed during this process \changed{(as it often done for linear systems)} because this would require access to the Krylov basis. As discussed in~\cite{DruskinGreenbaumKnizhnerman1998,DruskinKnizhnerman1995}, the lack of reorthogonalization does not jeopardize numerical stability but it may result in delayed convergence.
An alternative solution to memory limitations is to \emph{restart} the Arnoldi process.

Restarted Krylov methods for linear systems are well studied, and some of  their convergence analysis carries over to matrix functions; see Algorithm~\ref{alg:restarts_lin_sys} for an outline of the basic restart procedure for the linear system of equations $A \vx = \vb$. Note that the update $\vz_m^\k$ computed by solving the correction equation $A \vz = \vr_m^\k$ is an approximation to the error $\vx - \vx_m^\k$. The existence of this useful linear correction equation relies on the fact that we are solving a linear system of equations and it will, in general, not be available for general matrix funtions.   It is important to keep in mind that restarts may either speed up or slow down convergence.  For both FOM and GMRES, it is possible to design a pair $(A,\vb)$  for which convergence is as slow as possible or even stagnates \cite{Schweitzer2016a, DuintjerTebbensMeurant2014, VecharynskiLangou2011}.  At the same time, there are scenarios in which a shorter cycle length corresponds counter-intuitively to faster convergence~\cite{Embree2003}.  In light of these issues, it is common to combine restarts with acceleration techniques, such as spectral deflation, ``thick" restarting, and weighted inner products, in order to obtain faster convergence \cite{StathopoulosSaadWu1998, EiermannErnstSchneider2000, WuSimon2000, BakerJessupManteuffel2005, EiermannErnstGuettel2011, YinYin2014, EmbreeMorganNguyen2017}.

\begin{algorithm}[htbp!]
	\caption{Basic restart procedure for  $A\vx=\vb$}\label{alg:restarts_lin_sys}
	\begin{algorithmic}
		\State Given $A\in\C^{N\times N}$, $\vb\in\C^N$, residual tolerance $\varepsilon>0$, and   integers $m,k_{\max}\geq 1$
		\State Compute Krylov approximation $\vx_m^\one$ to $A \vx = \vb$ and residual $\vr_m^\one = \vb - A \vx_m^\one$
		\While {$k = 1, \ldots, k_{\max}$ and  $\norm{\vr_m^\k}_2 > \varepsilon$}
		\State Compute Krylov approximation $\vz_m^\k$ to $A \vz = \vr_m^\k$
		\State Update $\vx_m^\kmore \leftarrow \vx_m^\k + \vz_m^\k$
		\State Compute $\vr_m^{(k+1)} = \vb - A \vx_m^{(k+1)}$
		\EndWhile
	\end{algorithmic}
\end{algorithm}

To formulate a restart procedure for the Arnoldi approximation~\eqref{eq:arnoldi_f(A)b}, we need an  approximation to the error after each cycle, similarly to the role played by $\vz_m^\k$ in Algorithm~\ref{alg:restarts_lin_sys}.  Except for special types of functions $f$, devising such a correction procedure is not straightforward, and a number of techniques have been proposed over the years; see, e.g., Section 2 of Frommer et\ al. \cite{FrommerGuettelSchweitzer2014a}, which compares several approaches \cite{EiermannErnst2006, AfanasjewEiermannErnstGuettel2008b, IlicTurnerSimpson2010} in detail.  In the following, we focus on two approaches that have proven to be numerically stable, with the first one being the most general (Section~\ref{sec:rest1}), but the second one being generally more efficient (Section~\ref{sec:rest2}). Finally, we end with a list of Krylov-based software available for the approximation of $f(A)\vb$ (Section~\ref{sec:rest3}).

\subsection{Restarting for general matrix functions}\label{sec:rest1}
The first approach we consider was developed by Eiermann and Ernst \cite{EiermannErnst2006} and is designed for any function for which $f(H)$ can be directly computed for a small, dense matrix $H \in \C^{M \times M}$, $M \ll N$.  We begin by demonstrating how the procedure works for two Arnoldi cycles.
Assume that after the first cycle we have obtained an Arnoldi decomposition $A V_m^{(1)} = V_m^{(1)} H_m^{(1)} + h_{m+1,m}^{(1)} \vv_{m+1}^{(1)} \ve_m^T$ and the corresponding Arnoldi approximation $\vf_m^{(1)} = \beta V_m^{(1)} f(H_m^{(1)}) \ve_1$. 
We can now compute a second Arnoldi decomposition $A V_m^{(2)} = V_m^{(2)} H_m^{(2)} + h_{m+1,m}^{(2)} \vv_{m+1}^{(2)} \ve_m^T$ for the Krylov space $\spK_m(A,\vv_{m+1}^{(1)})$, i.e., with  $\vv_{m+1}^{(1)} = V_m^{(2)}\ve_1$ being the starting vector. The goal is to approximate the error
\begin{equation} \label{eq:error_EE06}
	\vd_m^\one := f(A) \vb - \vf_m^\one
\end{equation}
with just $V_m^\two$ and the enlarged block-Hessenberg matrix
\[
H_{2m} :=
\begin{bmatrix}
H_m^\one 						& O \\
h_{m+1,m}^\one \ve_1 \ve_m^T 	& H_m^\two
\end{bmatrix}.
\]
To this end we consider a concatenation of the Arnoldi decompositions from $\spK_m(A,\vb)$ and $\spK_m(A,\vv_{m+1}^{(1)})$,
\begin{equation} \label{eq:arnoldi-like_relation}
A W_{2m} = W_{2m} H_{2m} + h_{m+1,m}^\two \vv_{m+1}^\two \ve_{2m}^T,
\end{equation}
where $W_{2m} := [V_m^\one , V_m^\two]$.  This relation~\eqref{eq:arnoldi-like_relation} is often referred to as an Arnoldi-like decomposition. An analysis of these and more general types of Arnoldi decompositions, and discussions of their uses in acceleration techniques for restarts, can be found in Stewart \cite{Stewart2001}, Eiermann and Ernst \cite{EiermannErnst2006}, and Eiermann et\ al.\cite{EiermannErnstGuettel2011}. 

Using the Arnoldi-like decomposition~\eqref{eq:arnoldi-like_relation} we can define an associated Arnoldi-like approximation to $f(A)\vb$ as 
\begin{equation} \label{eq:f2m_v1}
\vf_{2m} :=  \beta W_{2m} f(H_{2m}) \ve_1.
\end{equation}
Since $H_{2m}$ is block triangular, we have (see, e.g., Theorem 1.13 in \cite{Higham2008})
\[
f(H_{2m}) = \begin{bmatrix}
f(H_m^\one) & O \\
X_{2,1} & f(H_m^\two)
\end{bmatrix},
\]
so that \eqref{eq:f2m_v1}  reduces to
\begin{equation} \label{eq:f2m_v2}
\vf_{2m} = \vf_m^\one +  \beta V_m^\two X_{2,1} \ve_1.
\end{equation}
Equation~\eqref{eq:f2m_v2} thus suggests a way to update $\vf_m^\one$, as long as we can determine a computable expression for $X_{2,1}$.  In \cite{EiermannErnst2006}, this is first achieved via divided differences formulas and the characteristic polynomial for $H_m^\two$, and it is shown that $\beta V_m^\two X_{2,1} \ve_1 $ is indeed an approximation to $\vd_m^\one$.  For subsequent restart cycles, however, extending this process based on divided differences leads to numerical instabilities.  It turns out that computing $X_{2,1}$ directly by taking the bottom left block of $f(H_{2m})$ is stable but requires more computational effort, because the entire matrix $f(H_{2m})$ must be computed first. A complete description of this costly, but numerically stable, procedure for an arbitrary number of restart cycles is given by Algorithm~\ref{alg:restarts_mat_func1}, \changed{wherein MATLAB notation is used in the last line to denote the extraction of the last $m$ elements of the computed vector}.  \changed{Indeed, only the first column of $f(H_{km})$ is needed to compute the update; it remains an open problem whether this can be achieved more efficiently than computing all of $f(H_{km})$.}

\changed{We have not specified how to obtain error bounds or estimates for the restarted Arnoldi approximation. A detailed error analysis is outside the scope of this survey but can be found, along with an acceleration procedure based on thick restarting, in \cite{EiermannErnst2006, EiermannErnstGuettel2011}.}

\begin{algorithm}
	\caption{Restarted Arnoldi approximation for general $f(A)\vb$}\label{alg:restarts_mat_func1}
	\begin{algorithmic}
		\State  Given function $f$, $A\in\C^{N\times N}$, $\vb\in\C^N$ of norm $\beta>0$, tolerance $\varepsilon>0$, and   integers $m,k_{\max}\geq 1$ 
		\State Compute an Arnoldi decomposition for $\spK_m(A,\vb)$: $A V_{m}^\one =  V_{m}^\one H_m^\one + h_{m+1,m}^\one\vv_{m+1}^\one \ve_m^T$
		\State Compute first Arnoldi approximation $\vf_m^\one \leftarrow  \beta V_m^\one f(H_m^\one)\ve_1$
		\State Set $H_{1m} \leftarrow H_m^\one$
		\While {$k = 2, \ldots, k_{\max}$ and error measure greater than $\varepsilon$}
		\State Compute an Arnoldi decomposition for $\spK_m \left(A,\vv_{m+1}^\kless \right)$: $A V_{m}^\k = V_{m}^{(k)} H_m^{(k)} + h_{m+1,m}^{(k)}\vv_{m+1}^{(k)} \ve_m^T$
		\State Update $H_{km} \leftarrow \begin{bmatrix} H_{(k-1)m} & O \\ h_{m+1,m}^\kless \ve_1 \ve_{(k-1)m}^T &H_m^\k \end{bmatrix}$
		\vspace*{2mm}
		\State Update $\vf_m^\k \leftarrow \vf_m^\kless + \beta V_m^\k [f(H_{km}) \ve_1]_{(k-1)m+1:km}$
		\EndWhile
	\end{algorithmic}
\end{algorithm}
Note that the matrix $H_{km}$ grows in size with each restart cycle. At some point, its size could become prohibitively large for evaluating $f(H_{km})$, depending on the available computational resources.  The restarting approach reviewed in the next section overcomes this problem, but the scope of functions to which it can be applied  is limited.

\subsection{Quadrature-based restarting for Cauchy--Stieltjes functions}\label{sec:rest2}
Residual relations for families of shifted linear systems can be used to formulate an error update function for matrix functions that have a Cauchy--Stieltjes representation.  Indeed, the following approach can be extended to other functions with some kind of integral representation, but the analysis becomes much more difficult; see, e.g., the work by Frommer et\ al.\cite{FrommerGuettelSchweitzer2014a}.

Let $f: \C \setminus (-\infty,0] \to \C$ be a Cauchy--Stieltjes function of the form
\begin{equation} \label{eq:Cauchy-Stieltjes}
f(z) = \int_0^\infty \frac{1}{z + t} \d \mu(t),
\end{equation}
where $\mu$ is a positive and monotonically increasing, real-valued function on $ [0,+\infty)$ such that $\int_0^\infty \frac{1}{1+t} ~\d\mu(t) < \infty$. Such functions are a subclass of Markov functions \cite{GuettelKnizhnerman2011}.  For more information about properties of these functions, see the text by Henrici \cite{Henrici1977} or the introductions to Ili\'{c} et\ al.\cite{IlicTurnerSimpson2010} and Schweitzer's thesis \cite{Schweitzer2015}.

An important example of a Cauchy--Stieltjes function is the inverse square root.  More generally, for $\alpha \in (0,1)$, we have
\begin{equation} \label{eq:zalpha_int}
z^{-\alpha} = \frac{\sin((1-\alpha)\pi)}{\pi} \int_{0}^{\infty} \frac{1}{z+t} \d \mu(t) \quad \quad \text{with} \quad \d \mu(t) = t^{-\alpha} \d t.
\end{equation}
The logarithm can also be expressed in terms of a Cauchy--Stieltjes integral, after noting that
\begin{equation} \label{eq:logz_int}
\frac{\log(1+z)}{z} = \int_{0}^{\infty} \frac{1}{z + t} \d\mu(t) \quad \text{with} \quad \d \mu(t) =
\begin{cases}
0 \d t, & \text{for\ } 0 \leq t \leq 1, \\
t^{-1} \d t, & \text{for\ } t > 1.
\end{cases}
\end{equation}

With the form \eqref{eq:Cauchy-Stieltjes}, $f(A) \vb$ is defined as $f(A) \vb = \int_0^\infty (A + tI)^\inv \vb \d \mu(t)$, which can alternatively be thought of as an integral over $\vx(t)$, the solution to the family of shifted systems
\begin{equation} \label{eq:family_shifted_systems}
(A + tI) \vx(t) = \vb, \quad t \geq 0.
\end{equation}
Since Arnoldi decompositions \eqref{eq:arnoldi_relation} are \emph{shift-invariant}, i.e., $(A + tI) V_m = V_m(H_m + tI) + h_{m+1,m} \vv_{m+1} \ve_m^T$, it is reasonable to approximate $\vx(t)$ by
\[
\vx_m(t) := \beta V_m(H_m + tI)^\inv \ve_1.
\]
In this way, the same Krylov basis $V_m$ is used for all shifts and needs to be constructed only once.  Consequently, $\vf_m = \int_0^\infty \vx_m(t) \d \mu(t)$.  Letting $\vr_m(t) = \vb - (A + tI) \vx_m(t)$ denote the residual of $\vx_m(t)$, we find that the error for the matrix function approximation can be written as
\begin{equation} \label{eq:error_krylov-like_approx_f(A)b}
\vd_m := f(A) \vb - \vf_m = \int_0^\infty (A+tI)^\inv \vb - \vx_m(t) \d \mu(t) = \int_0^\infty (A + tI)^\inv \vr_m(t) \d \mu(t).
\end{equation}
The idea is to use \eqref{eq:error_krylov-like_approx_f(A)b} and the fact that the residual for \eqref{eq:family_shifted_systems} is \changed{the integrand} to formulate a restart procedure.  We could approximate $\vd_m$ by building the next Krylov space from $\vr_m(t)$ to find an approximation to the shifted error and integrate over $t$.  If we have a quadrature rule set a priori, we would have a finite number of shifts.  \changed{Since Krylov subspaces are shift-invariant, it is possible to use the same space for each shift.}

Frommer and Gl{\"a}ssner \cite{FrommerGlaessner1998}, as well as Simoncini \cite{Simoncini2003} study restarts for GMRES and FOM methods applied to \eqref{eq:family_shifted_systems}, respectively.  In both cases, a \emph{colinear} relationship between $\vr_m(0)$ and $\vr_m(t)$ is obtained,\footnote{For GMRES, the starting approximation must be identical for all shifted systems.} allowing them to perform restarts for shifted systems from a single Krylov space at each new cycle.  Theorem~4.1 from Frommer et\ al. \cite{FrommerLundSzyld2020} comprises both results by formulating a \emph{cospatial} relationship for block Krylov methods. We state here just the result for FOM.
\begin{theorem} \label{thm:residual_colinearity}
	Let $\rho_m(t) := -\beta\ve_m^T(H_m + tI)^\inv \ve_1$. Then
	\[
	\vr_m(t) = \rho_m(t) \vv_{m+1},
	\]
	and in particular, $\vr_m(t)$ is colinear to $\vr_m(0)$ with factor $\eta_m(t):= \rho_m(t)/\rho_m(0)$.
\end{theorem}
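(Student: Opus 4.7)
The plan is a direct, short calculation exploiting two facts: the shift-invariance of the Arnoldi decomposition~\eqref{eq:arnoldi_relation} and the defining identity $\vb = \beta V_m \ve_1$. First, I would shift~\eqref{eq:arnoldi_relation} by $tI$ to obtain
\[
(A+tI)V_m = V_m(H_m+tI) + h_{m+1,m}\vv_{m+1}\ve_m^T,
\]
emphasizing that the basis $V_m$ and the rank-one coupling $h_{m+1,m}\vv_{m+1}\ve_m^T$ are identical for every shift $t$; only the projected Hessenberg matrix is shifted. This is the structural heart of the argument and the reason a single Krylov space suffices for the whole family~\eqref{eq:family_shifted_systems}.

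Next I would substitute $\vx_m(t) = \beta V_m(H_m+tI)^\inv \ve_1$ into $\vr_m(t) = \vb - (A+tI)\vx_m(t)$ and expand via the shifted Arnoldi relation. The factor $V_m(H_m+tI)$ telescopes with $(H_m+tI)^\inv$, producing
\[
\vr_m(t) = \vb - \beta V_m\ve_1 - \beta h_{m+1,m}\bigl(\ve_m^T(H_m+tI)^\inv \ve_1\bigr)\vv_{m+1}.
\]
The first two terms cancel by the normalization $\vb = \beta V_m\ve_1$, leaving $\vr_m(t)$ as a scalar multiple of the single fixed direction $\vv_{m+1}$, with scalar prefactor precisely $\rho_m(t)$. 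This is the first claim of the theorem.

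The colinearity assertion is then an immediate corollary: both $\vr_m(t)$ and $\vr_m(0)$ lie along $\vv_{m+1}$, so their ratio is the scalar $\eta_m(t) = \rho_m(t)/\rho_m(0)$. This is well-defined provided $\rho_m(0)\neq 0$; the degenerate case $\rho_m(0)=0$ corresponds to the standard FOM non-breakdown condition on the unshifted system $A\vx=\vb$, in which case $\vx_m$ already solves the unshifted system exactly and there is nothing to discuss.

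There is no real obstacle here; the identity is essentially algebraic, and its value lies not in the depth of the proof but in what it enables downstream. Because every shifted residual collapses to the same direction $\vv_{m+1}$ scaled by a cheaply computable scalar, the integration against $\d\mu(t)$ in~\eqref{eq:error_krylov-like_approx_f(A)b} can be carried out entirely in the $(m+1)$-dimensional Hessenberg world, which is precisely the mechanism powering the quadrature-based restart described in Section~\ref{sec:rest2}. The only subtlety worth flagging in writing the proof is bookkeeping of the factor $h_{m+1,m}$ which is absorbed into $\rho_m(t)$ via the definition; everything else is mechanical.
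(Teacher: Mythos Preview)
Your approach is correct and is exactly what the paper does: its entire proof reads ``The proof follows immediately after substituting~\eqref{eq:arnoldi-like_relation} in the definition of the shifted residual,'' and you have simply spelled out that substitution. One bookkeeping point to tighten: your own expansion produces the prefactor $-\beta h_{m+1,m}\,\ve_m^T(H_m+tI)^{-1}\ve_1$, whereas the stated definition of $\rho_m(t)$ omits $h_{m+1,m}$; contrary to your closing remark, that factor is \emph{not} absorbed by the definition as written, so either the theorem statement is missing it or you should note the discrepancy explicitly rather than waving it into $\rho_m(t)$. This does not affect the colinearity conclusion, since $h_{m+1,m}$ is independent of $t$ and cancels in $\eta_m(t)$.
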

\begin{proof}
	The proof follows immediately after substituting \eqref{eq:arnoldi-like_relation} in the definition of the shifted residual.
\end{proof}
With Theorem~\ref{thm:residual_colinearity}, we thus only need to compute the basis for $\spK_m(A,\vv_{m+1})$, which can then be used for all residuals $\vr_m(t)$.  Denoting again quantities in the Arnoldi decompositions associated with $\spK_m(A,\vb)$ and $\spK_m(A,\vv_{m+1})$ with superscripts~$^\one$ and~$^\two$, respectively, we approximate $\vd_m^\one$ by
\[
\vdtil_m^\one := \int_0^\infty \rho_m^\one(t)\, V_m^\two \big( H_m^\two + tI \big)^\inv \ve_1  \d \mu(t).
\]
We can approximate the integral with a quadrature rule, which is now over small systems of size $m$.  The colinear factor $\rho_m^\one(t)$ can also be computed more efficiently if more is known about the structure of $H_m^\one$.  Indeed, it is known that $\rho_m^\one(t)$ is a polynomial whose roots are the Ritz values $\Lambda(H_m^\one)$, and similar results hold for harmonic or Radau--Lanczos approximations \cite{FrommerGuettelSchweitzer2014a, FrommerGuettelSchweitzer2014b, FrommerLundSchweitzer2017}. We then only need to keep the Ritz values from the previous cycle to compute the integral, instead of the whole matrix $H_m^\one$.

It is of course possible to restart repeatedly.  The only caveat is that we must keep track of the colinear factors $\rho_m^\k(t)$, since they accumulate in the integral.  After $k+1$ cycles, one has to compute
\begin{equation} \label{eq:error_integral}
\vdtil_m^\k := V_m^\kmore I_m^\k(H_m^\kmore) \ve_1,
\mbox{\quad where \quad }
I_m^\k(z) := \int_0^\infty \rho_m^\k(t) \cdots \rho_m^\one(t)\, (z + t)^\inv  \d \mu(t),
\end{equation}
To reduce the computational effort of the quadrature approximation for $I_m^\k(z)$, it is recommended to use an adaptive rule that reduces the number of nodes as the integrand becomes smaller (which happens naturally as the error  $\|f(A)\vb - \vf_m^{(k)}\|_2$ decreases). For details on specific choices of quadrature rules for the functions $z^{-\alpha}$, $\log(1 + z)/z$, and $\exp(z)$, see Frommer et\ al.\cite{FrommerGuettelSchweitzer2014a}.

The full procedure is outlined in Algorithm~\ref{alg:restarts_mat_func2}.  Note that it is also possible to make the size of the basis (i.e., $m$) adaptive, as well as the choice of $H_m$ or a harmonic or Radau--Lanczos version of $H_m$, but a detailed analysis of these parameter choices remains open. Error analyses for Algorithm~\ref{alg:restarts_mat_func2} and its variants on Hermitian positive definite or positive real matrices can be found in various works by Frommer et\ al. \cite{FrommerGuettelSchweitzer2014a, FrommerGuettelSchweitzer2014b, FrommerLundSchweitzer2017}.

\begin{algorithm}
	\caption{Quadrature-based restarted Arnoldi approximation for $f(A)\vb$}\label{alg:restarts_mat_func2}
	\begin{algorithmic}
		\State Given function $f$, $A\in\C^{N\times N}$, $\vb\in\C^N$ of norm $\beta>0$, tolerances $\varepsilon,\delta >0$, and   integers $m,k_{\max}\geq 1$ 
		\State Compute an Arnoldi decomposition for $\spK_m(A,\vb)$: $A V_{m}^\one =  V_{m}^\one H_m^\one + h_{m+1,m}^\one\vv_{m+1}^\one \ve_m^T$
		\State Compute first Arnoldi approximation $\vf_m^\one \leftarrow  \beta V_m^\one f(H_m^\one)\ve_1$
		\While {$k = 2, \ldots, k_{\max}$ and error measure greater than $\varepsilon$}
			\State Compute an Arnoldi decomposition for $\spK_m \left(A,\vv_{m+1}^\kless \right)$: $A V_{m}^\k = V_{m}^{(k)} H_m^{(k)} + h_{m+1,m}^{(k)}\vv_{m+1}^{(k)} \ve_m^T$
			\State Compute $I_m^\kless \left(H_m^\k \right) \ve_1$ defined by \eqref{eq:error_integral} up to tolerance $\delta$ using adaptive quadrature
		\State Update $\vf_m^\k \leftarrow \vf_m^\kless + V_m^\k I_m^\kless \left(H_m^\k \right) \ve_1$
		\EndWhile
	\end{algorithmic}
\end{algorithm}

A key benefit of having adaptive quadrature in Algorithm~\ref{alg:restarts_mat_func2} is that the amount of work might reduce as we converge.  Note that if the quadrature rule is fixed a priori and throughout the algorithm, then Algorithm~\ref{alg:restarts_mat_func2} can be written more efficiently with a fixed rational approximation; see, e.g., \cite{HaleHighamTrefethen2008}.

\subsection{Software}\label{sec:rest3}
Software packages implementing Arnoldi-based algorithms for matrix functions, predominantly for the matrix exponential, are listed below.
\begin{itemize}
	\item EXPOKIT: \url{https://www.maths.uq.edu.au/expokit/}. MATLAB and FORTRAN implementations of  Arnoldi-based methods  for the matrix exponential and related functions~\cite{Sidje1998}.
	\item \texttt{expAtv}: \url{https://cran.r-project.org/web/packages/expm/}. An R implementation of the EXPOKIT methods.
		\item NEXPOKIT: \url{https://www.cs.purdue.edu/homes/dgleich/codes/nexpokit/}. Network matrix exponentials for link-prediction, centrality measures, and more.
	\item The Matrix Function Toolbox: \url{https://www.maths.manchester.ac.uk/~higham/mftoolbox/}. Non-restarted Arnoldi method written in \textsc{Matlab}.
	\item \texttt{markovfunmv}: \url{http://guettel.com/markovfunmv/}. A \textsc{Matlab} implementation of a black-box rational Arnoldi method for Markov matrix functions.
	\item \texttt{expmARPACK}: \url{https://swmath.org/software/13396}. Krylov subspace exponential time domain solution of Maxwell's equations in photonic crystal modeling. 
\end{itemize}

\textsc{Matlab} software for restarted Krylov approximations to $f(A)\vb$ are hosted at the following websites:
\begin{itemize}
	\item \texttt{funm\_kryl}: \url{http://www.guettel.com/funm_kryl/}.  Algorithm~\ref{alg:restarts_mat_func1} and versions from papers related to \cite{EiermannErnst2006} are implemented.  There are also options for deflated and thick restarting; see \cite{EiermannErnstGuettel2011}.
	\item \texttt{funm\_quad}: \url{http://www.guettel.com/funm_quad/}.  Algorithm~\ref{alg:restarts_mat_func2} is implemented for various Cauchy--Stieltjes functions, as well as the exponential function.   The code accompanies the papers \cite{FrommerGuettelSchweitzer2014a, FrommerGuettelSchweitzer2014b} and also allows for thick restarts.
	\item \texttt{B(FOM)$^2$}: \url{https://gitlab.com/katlund/bfomfom-main}.  Block Krylov methods for computing the action of $f(A)$ to multiple vectors simultaneously are implemented in the spirit of Algorithm~\ref{alg:restarts_mat_func2}.  The syntax mimics that of \texttt{funm\_quad}, and the code accompanies the papers \cite{FrommerLundSzyld2017, FrommerLundSzyld2020}.
\end{itemize}

\section{Summary} \label{sec:summary}

We have provided an overview of essential methods for approximating $f(A)\vb$ when $A$ is large-scale, i.e., in cases when only the action of $A$ on vectors is feasible to compute and when available memory resources are a limitation. Despite what may seem like severe restrictions, two key approaches---(a priori) expansion-based methods and (restarted) Krylov methods---have emerged over the years and have given rise to a variety of excellent algorithms. As polynomials form the backbone of both approaches, these methods are easily accessible to practitioners and they can be analyzed using  classical tools from approximation theory. 

\changed{In terms of open research questions, we just point here to the need for more accurate stopping criteria, stability analysis, preconditioning, and high-performance software implementations for restarted Krylov methods, as outlined in Section~\ref{sec:open_problems}. We hope that this manuscript will provide a good launching point for interested researchers to continue exploring these challenging problems. Also, a clearer understanding of which method to use under what circumstances would be very welcome. G\"{u}ttel and Schweitzer \cite{GuettelSchweitzer2020} make a first step in this direction by comparing the expected performance of the restarted Lanczos method, a multi-shift CG method, as well as two inexact rational Krylov methods with polynomial inner solves for the approximation of Stieltjes functions of Hermitian matrices. By comparing error bounds for these methods, it is found that both restarted Lanczos and multi-shift CG are among the best available polynomial methods for this purpose in the limited-memory scenario. Further work should address the question of how methods compare when implemented in practice, using, e.g., parallel computations and mixed-precision arithmetic.}
\section{Open problems} \label{sec:open_problems}

General stopping criteria for restarted Krylov methods remain an open issue. 
The error bounds provided by Eiermann and Ernst \cite{EiermannErnst2006}, as well as Frommer et\ al. \cite{FrommerGuettelSchweitzer2014a, FrommerGuettelSchweitzer2014b}, are known to be  pessimistic in practice. Some progress for Cauchy--Stieltjes functions of Hermitian positive definite matrices  has been made by Frommer and Schweitzer \cite{FrommerSchweitzer2016}. Using the integral form of the error~\eqref{eq:error_integral} and the fact that it can again be written as a Cauchy--Stieltjes function, they compute tighter lower and upper bounds of $\norm{f(A) \vb - \vf_m}_2$, where $\vf_m$ is the non-restarted Lanczos approximation, via connections to bilinear forms and Gauss quadrature from Golub and Meurant's monograph \cite{GolubMeurant2010}.  The bounds can also be computed via quadrature at little additional cost, making them useful in practice.

Preconditioning Krylov approximations for $f(A)\vb$ also remains challenging.  Some success has been seen for the matrix exponential; see, e.g., van den Eshof and Hochbruck \cite{EshofHochbruck2006} and Wu et\ al. \cite{WuPangSun2018}.  Connected to the preconditioning challenge is a rigorous backward stability analysis for Algorithms~\ref{alg:restarts_mat_func1} and \ref{alg:restarts_mat_func2}.  Numerical experiments for moderately conditioned $A$ indicate that the algorithms are stable, but without a universal preconditioning mechanism, there are bound to be scenarios in which these algorithms struggle.  Furthermore, restarting introduces other sources of instability, and it is well known that both restarted FOM and GMRES can stagnate \cite{DuintjerTebbensMeurant2012, DuintjerTebbensMeurant2014, Schweitzer2016a}.  One possible way to overcome stagnation is to allow for variable basis sizes, but criteria for determining an optimal basis size at each restart cycle remain open. The residual-based restarting method by Botchev and Knizhnerman \cite{BotchevKnizhnerman2020} does allow for variable basis sizes by exploiting the existence of an initial value problem that is satisfied by the matrix exponential and closely related functions, but it is unclear how to generalize this idea to other functions.

High-performance implementations of Algorithms~\ref{alg:restarts_mat_func1} and \ref{alg:restarts_mat_func2}, particularly parallelizable and communication-reducing formulations of these algorithms, are needed.  Much of the theory for Krylov methods for linear systems should transfer (see, e.g., $s$-step \cite{BallardCarsonDemmel2014} or enlarged methods \cite{GrigoriMoufawadNataf2016}), but there are  difficulties regarding memory allocation for the growing Hessenberg matrices and the adaptive quadrature rules that need to be overcome.

While not touched on here in detail, we finally point out that there are many similarities between techniques used for matrix equations and those used for matrix functions, and success for one class of problems often transfers to the other.  Some preliminary work to connect the two has been conducted by Kressner \cite{Kressner2019}, and a successful application of restarts to Krylov methods for matrix equations has been developed in Kressner et\ al.\cite{KressnerLundMassei2020}.
\begin{paragraph}{Acknowledgements.}
Parts of this survey were prepared during visits of the last two authors to the Department of Mathematics at The University of Manchester, whose hospitality is gratefully acknowledged. The first author acknowledges support by The Alan Turing Institute under the EPSRC grant~EP/N510129/1.   The third author was supported in part by the SNSF research project \emph{Fast Algorithms from Low-rank Updates}, grant number 200020\_178806, and in part by the Charles University PRIMUS grant, \emph{Scalable and Accurate Numerical Linear Algebra for Next-Generation Hardware}, project ID PRIMUS/19/SCI/11.
\end{paragraph}

\bibliographystyle{abbrv}
\bibliography{mat_func_survey}

\end{document}